\newtheorem{theorem}{Theorem}[section]
\newtheorem{corollary}[theorem]{Corollary}
\newtheorem{lemma}[theorem]{Lemma}
\newtheorem{proposition}[theorem]{Proposition}
\theoremstyle{definition}
\newtheorem{example}[theorem]{Example}
\theoremstyle{remark}
\title{The rank of a warping matrix}
\author{
Taira Akiyama \thanks{Department of Information and Computer Engineering, Gunma National College of Technology, 580 Toriba-cho, Maebashi-shi, Gunma, 371-8530, Japan.  }
\and 
Ayaka Shimizu \thanks{Department of Mathematics, Gunma National College of Technology, 580 Toriba-cho, Maebashi-shi, Gunma, 371-8530, Japan. Email: shimizu@nat.gunma-ct.ac.jp }
\and 
Ryohei Watanabe \thanks{Department of Information and Computer Engineering, Gunma National College of Technology, 580 Toriba-cho, Maebashi-shi, Gunma, 371-8530, Japan.  }
}
\date{\today}
\begin{document}

\maketitle

\begin{abstract}
The warping matrix has been defined for knot projections and knot diagrams by using warping degrees. 
In particular, the warping matrix of a knot diagram represents the knot diagram uniquely. 
In this paper we show that the rank of the warping matrix is one greater than the crossing number. 
We also discuss the linearly independence of knot diagrams by considering the warping incidence matrix. 
\end{abstract}


\section{Introduction}

The warping matrix is defined for oriented knot projections or diagrams on $S^2$ with all information of warping degree. 
It is shown in \cite{shimizu-wm} that there is one-to-one correspondence between oriented knot diagrams on $S^2$ and warping matrices. 
Hence the warping matrix can be used as a notation for oriented knots. 
However, it is not easy to calculate warping matrices for knot projections and diagrams with a large crossing number; 
the size of the warping matrix of a knot projection with $c$ crossings is $2^c \times 2c$, and that of a knot diagram is $(2^c -1) \times 2c$. 
One of the motivations on the study of warping matrix is to reduce the size of the matrix. 
In this paper we show the following theorem: 

\phantom{x}
\begin{theorem}
Let $P$ be an oriented knot projection on $S^2$, and $M(P)$ the warping matrix of $P$. 
We have the following equality: 
\begin{align*}
\mathrm{rank} M(P)=c(P)+1,
\end{align*}
where $c(P)$ is the crossing number of $P$. 
\label{mainthm}
\end{theorem}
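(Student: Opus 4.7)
The plan is to compute the row space of $M(P)$ explicitly and verify it has dimension $c+1$.

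\textbf{Affine structure of each row.} Label the $2c$ edges of $P$ in cyclic order $e_1,\dots,e_{2c}$ along the orientation, and label the crossings $x_1,\dots,x_c$. The $2^c$ rows of $M(P)$ correspond to over/under assignments $\sigma\in\{0,1\}^c$; write $D_\sigma$ for the resulting diagram. Starting the traversal at $e_1$, let $B(x_k)\subseteq\{1,\dots,2c\}$ be the set of edge indices lying between the first and the second passage through $x_k$. First I would check that, from a base point on $e_m$, the warping contribution of $x_k$ depends only on (i) whether $m\in B(x_k)$, which determines which passage through $x_k$ is encountered first, and (ii) $\sigma(x_k)$, which determines the over/under data at that passage. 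Combining the two cases yields the affine formula
\begin{align*}
d((D_\sigma)_{e_m}) \;=\; a_m \;+\; \sum_{k=1}^{c}\gamma_{m,k}\,\sigma(x_k),
\end{align*}
with $a_m=\#\{k:m\in B(x_k)\}$ and $\gamma_{m,k}=\pm 1$ according to whether $m\in B(x_k)$; equivalently, $\gamma_{\cdot,k}=\mathbf{1}-2\,\mathbf{1}_{B(x_k)}$.

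\textbf{Identifying the row space.} Evaluating the formula at $\sigma\equiv\mathbf{0}$ and at $\sigma\equiv\mathbf{1}$ produces two rows summing to $c\,\mathbf{1}$, so $\mathbf{1}$ lies in the row space. Subtracting the $\sigma=\mathbf{0}$ row from the row for $\sigma$ equal to the $k$-th standard basis vector recovers $\gamma_{\cdot,k}$, so $\mathbf{1}_{B(x_k)}=(\mathbf{1}-\gamma_{\cdot,k})/2$ lies in the row space as well. Conversely, expanding the affine formula exhibits every row as a linear combination of $\mathbf{1}$ and the $\mathbf{1}_{B(x_k)}$. Hence the row space of $M(P)$ equals
\begin{align*}
\mathrm{span}\bigl(\mathbf{1},\,\mathbf{1}_{B(x_1)},\,\dots,\,\mathbf{1}_{B(x_c)}\bigr),
\end{align*}
which immediately gives $\mathrm{rank}\,M(P)\le c+1$.

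\textbf{Linear independence.} For the matching lower bound, I would suppose $a_0\mathbf{1}+\sum_k a_k\mathbf{1}_{B(x_k)}=\mathbf{0}$ and set $f(m)=a_0+\sum_{k:\,m\in B(x_k)}a_k$, so that $f\equiv 0$. The combinatorial key is that moving the base point from $e_m$ to $e_{m+1}$ is a single passage through a single crossing $x_k$: the status $m\in B(x_k)$ toggles at this passage (entering $B(x_k)$ at the first visit to $x_k$, leaving at the second), while the memberships in $B(x_{k'})$ for $k'\neq k$ are unchanged. Hence $f(m+1)-f(m)=\pm a_k$, and since $f$ is constant and every crossing appears at some transition, each $a_k$ vanishes, after which $a_0=0$ follows. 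Therefore $\mathrm{rank}\,M(P)=c+1$.

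\textbf{Main obstacle.} The delicate step is the first: establishing the affine formula requires careful bookkeeping, for each base point and each crossing, of which of the two passages is visited first and how this interacts with the over/under assignment. Once the $\pm 1$-structure of the coefficients is pinned down, the remainder is a short linear-algebra exercise on the Gauss diagram of $P$, powered by the simple fact that advancing the base point by one edge toggles the $B(x)$-status of exactly one crossing.
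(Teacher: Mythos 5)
Your proposal is correct, but it reaches the conclusion by a genuinely different route than the paper. The paper works with column operations: subtracting consecutive columns of $M(P)$ turns the matrix into $(\bm{a},\bm{v}_1,\dots,\bm{v}_{2c-1})$ whose $\pm1$ part sits inside the ou matrix $U(P)$, the pairing of columns of $U(P)$ coming from the two passages of each crossing kills $c-1$ columns (giving $\mathrm{rank}\,M(P)\le c+1$), and the lower bound is obtained by exhibiting an explicit nonzero $(c+1)\times(c+1)$ minor, whose determinant is evaluated by the inductive computation of Lemma \ref{ranklem} and is nonzero because the warping degrees in $\bm{a}$ are non-negative with at most one zero. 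You instead describe the row space intrinsically: the affine formula $d((D_\sigma)_{e_m})=a_m+\sum_k\gamma_{m,k}\sigma(x_k)$ (which is correct, with the convention that $\sigma(x_k)$ records whether the first passage of $x_k$ relative to the reference edge is the under-passage) shows the row space is exactly $\mathrm{span}\bigl(\mathbf{1},\mathbf{1}_{B(x_1)},\dots,\mathbf{1}_{B(x_c)}\bigr)$, and independence of these $c+1$ vectors follows from the observation that advancing the base point across one crossing passage toggles exactly one membership $m\in B(x_k)$. It is worth noting that your vectors $\mathbf{1}_{B(x_k)}$ are precisely the rows of the warping incidence matrix $m(D)$ of the diagram in which every first passage is an overcrossing, and your toggle argument is essentially Proposition \ref{kl} and Lemma \ref{keylem} of Section 4; so your proof of Theorem \ref{mainthm} runs through the Section 4 machinery rather than through Lemma \ref{ranklem}. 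What your route buys is a determinant-free argument that never needs the non-negativity of warping degrees and that identifies the row space explicitly (and it adapts immediately to $\overline{M}(D)$, since the row-space computation only needs the rows $\sigma\equiv 0$, $\sigma\equiv\mathbf{1}$ and $\sigma=e_k$ up to an affine change, or more simply the span argument redone with any $c+2$ suitable rows); what the paper's route buys is an explicit nonvanishing minor and the determinant identity of Lemma \ref{ranklem}, which it reuses verbatim, via Corollary \ref{lankcor}, in the proof of Theorem \ref{mainthm2}. The only places where your write-up is a sketch rather than a proof are the verification of the affine formula (the two-case bookkeeping you describe is exactly right and routine) and, if one is fastidious, the remark that one of the $2c$ transitions exists for each crossing because every crossing is passed twice; neither is a gap in substance.
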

\phantom{x}

\noindent Let $D$ be an oriented knot diagram on $S^2$, and $c(D)$ the crossing number of $D$. 
Let $\overline{M}(D)$ be the \textit{warping matrix of $D$ without signs}, which is mentioned concretely in Section 2. 
We also have the following theorem: 

\phantom{x}
\begin{theorem}
We have 
\begin{align*}
\mathrm{rank} \overline{M}(D)=c(D)+1. 
\end{align*}
\label{mainthm2}
\end{theorem}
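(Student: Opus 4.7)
The plan is to reduce Theorem~\ref{mainthm2} to Theorem~\ref{mainthm} by comparing $\overline{M}(D)$ to the warping matrix $M(P)$ of the underlying knot projection $P$ of $D$ (obtained by forgetting all over/under information). Since $D$ and $P$ have the same crossing number $c$ and the same $2c$ edges, the two matrices share the same number of columns, while their row counts differ by exactly one: $M(P)$ has $2^c$ rows and $\overline{M}(D)$ has $2^c - 1$ rows. The guiding picture is that the $2^c$ rows of $M(P)$ should be indexed by the $2^c$ possible over/under assignments at the $c$ crossings (each such assignment produces a diagram, whose warping-degree data at the $2c$ base points fills a row), and the rows of $\overline{M}(D)$ should coincide, after discarding signs, with $2^c - 1$ of these.

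First, I would verify this correspondence precisely from the definitions of Section~2: each row of $\overline{M}(D)$ should equal, entry-by-entry, a row of $M(P)$, so that $\overline{M}(D)$ naturally realizes as a submatrix of $M(P)$ with exactly one row deleted. I expect the deleted row to correspond to the distinguished over/under assignment that is, in a precise sense, opposite to that of $D$ --- for instance, the assignment in which every crossing of $D$ is switched, or in which every crossing becomes ascending relative to a fixed base direction.

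The main step, and the principal obstacle, is then to show that this single deletion does not lower the rank, i.e., that the omitted row lies in the linear span of the remaining $2^c - 1$ rows. I would attempt to exhibit this linear dependence explicitly, producing a $\pm 1$ combination of rows of $\overline{M}(D)$ which recovers the missing row. Such a relation is plausible because warping degrees transform locally under a single crossing change, and aggregating these local recursions over all subsets of crossings should produce exactly one global relation among the $2^c$ rows --- precisely the one needed. Granting this, Theorem~\ref{mainthm} gives
\[
\mathrm{rank} \overline{M}(D) = \mathrm{rank} M(P) = c(P) + 1 = c(D) + 1,
\]
completing the proof. If instead the omitted row turns out to be linearly independent of the others, the same strategy could be adapted by showing directly that $\overline{M}(D)$ already contains $c+1$ linearly independent rows and no more, using the known column-space structure from the proof of Theorem~\ref{mainthm}.
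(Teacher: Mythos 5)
Your reduction is logically sound as a strategy: the row space of $\overline{M}(D)$ sits inside that of $M(P)$, which has dimension $c+1$ by Theorem \ref{mainthm}, so the theorem is exactly equivalent to the deleted row lying in the span of the remaining $2^c-1$ rows. But that is precisely the step you never establish: you call the needed relation ``plausible,'' gesture at ``aggregating local recursions over all subsets of crossings,'' and even include a fallback in case the deleted row turns out to be independent. As written, the proposal has a genuine gap at its central claim. (There is also a small definitional slip: by the paper's definition the deleted row of $M(P)$ is the row of $D$ itself, not of the diagram ``opposite'' to $D$, though this does not affect the structure of your argument.) The gap is fillable, and cleanly. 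Writing $r_E$ for the row of $M(P)$ corresponding to a diagram $E$ obtained from $P$, and $E^*$ for the diagram obtained from $E$ by changing every crossing, a crossing is a warping crossing point of $E_b$ exactly when it is not one of $E^*_b$, so $d(E_b)+d(E^*_b)=c$ for every base point $b$; hence every complementary pair of rows satisfies $r_E+r_{E^*}=c\,\bm{1}$. For $c\ge 2$ there are $2^{c-1}\ge 2$ complementary pairs, so one can choose a pair $\{E,E^*\}$ disjoint from $\{D,D^*\}$, and then $r_D=r_E+r_{E^*}-r_{D^*}$ is an explicit $\pm1$ combination of rows of $\overline{M}(D)$ recovering the missing row, which completes your reduction. (For $c=1$ the statement fails anyway, consistent with Lemma \ref{ranklem} requiring $c>1$, so the implicit hypothesis $c\ge 2$ is harmless.)

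For comparison, the paper does not argue via the span of the deleted row at all: it reruns the proof of Theorem \ref{mainthm} on $\overline{M}(D)$ directly, noting that after the same column operations one can still pick, from the surviving $2^c-1$ rows, a $(c+1)\times(c+1)$ submatrix of at least one of the two forms in Lemma \ref{ranklem} or Corollary \ref{lankcor}, whose determinant is nonzero since the entries of $\bm{a}$ are nonnegative with at most one zero. Your route, once the complement relation $r_E+r_{E^*}=c\,\bm{1}$ is supplied, is shorter and isolates the reason the theorem survives the row deletion; the paper's route avoids needing any identity among rows but requires checking that a suitable submatrix can always be found after deleting one row, which is why Corollary \ref{lankcor} is introduced.
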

\phantom{x}

\noindent The rest of this paper is organized as follows: 
In Section 2, we review the warping matrix. 
In Section 3, we prove Theorem \ref{mainthm} and \ref{mainthm2}. 
In Section 4, we define the warping incidence matrix of a knot diagram. 
In Section 5, we investigate linearly independence for knot diagrams.

\section{Warping matrix}

In this section we review the warping degree and warping matrix. 
See \cite{shimizu-wd} and \cite{shimizu-wm} for details. 
Let $D$ be an oriented knot diagram on $S^2$. 
Take a base point of $D$ avoiding crossing points. 
We denote by $D_b$ the pair of $D$ and $b$. 
A crossing point $p$ is a \textit{warping crossing point} of $D_b$ if we meet $p$ as an undercrossing first when we travel $D$ from $b$. 
The \textit{warping degree} $d(D_b)$, which is defined by Kawauchi in \cite{kawauchi}, of $D_b$ is the number of the warping crossing points of $D_b$. 

Let $P$ be an oriented knot projection on $S^2$. 
Take a base point at each edge of $P$, where edge means a part of $P$ between two crossings which has no crossings in the interior. 
Label them $b_1, b_2, \dots ,b_{2c}$ in order of traversal from an edge. 
From $P$, we obtain $2^c$ knot diagrams by giving over/under information at each crossing of $P$. 
We call them $D^1, D^2, \dots ,D^{2^c}$. 
The \textit{warping matrix} $M(P)$ of $P$ is the $2^c \times 2c$ matrix defined by: 
\begin{align*}
M(P)=( & a_{i j}), \\
\text{where } & a_{i j}=d(D^i _{b_j}). 
\end{align*}

\noindent An example is shown in Fig. \ref{image}. 
We consider warping matrices up to permutations on rows and cyclic permutations on columns. 
\begin{figure}[h]
\begin{center}
\includegraphics[width=130mm]{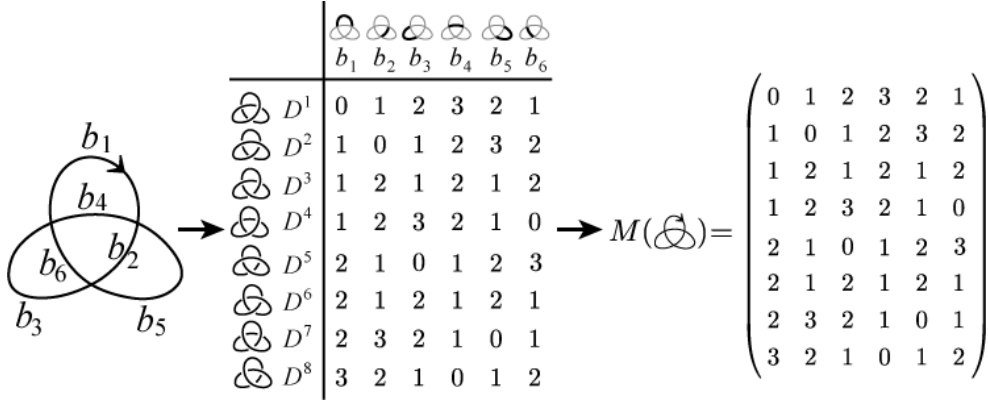}
\caption{Warping matrix.}
\label{image}
\end{center}
\end{figure}
Each row of $M(P)$ is said to be the \textit{warping degree sequence} of the corresponding diagram. 
As mentioned in Proposition 2.2 in \cite{shimizu-wm}, warping matrices of knot projections have the following properties: 
\begin{itemize}
\item On each row, the difference of two elements which are next to each other is one. 
\item On each column, $n$ appears 
$\begin{pmatrix}
c \\
n
\end{pmatrix}$
times $(n=0,1,2, \dots , c)$. 
\end{itemize}

\noindent The \textit{ou matrix} $U(P)$ of $P$ is the matrix obtained from $M(P)$ by the multiplication 
\begin{align*}
U(P)=M(P) \times A,
\end{align*}
where $A$ is the $2c \times 2c$ matrix as follows: 
\begin{align*}
A= \left(
\begin{array}{ccccc}
-1 & 0 & \ldots & 0 & 1 \\
1 & -1 & \ldots & 0 & 0 \\
0 & 1 & \ldots & 0 & 0 \\
\vdots & \vdots & \ddots & \vdots & \vdots \\
0 & 0 & \ldots & -1 & 0 \\
0 & 0 & \ldots & 1 & -1 \\
\end{array}
\right) .
\end{align*}

\noindent We give an example. 

\phantom{x}
\begin{example}
The warping matrix $M(P)$ of the knot projection $P$ depicted in Fig. \ref{twist} is
\begin{figure}[h]
\begin{center}
\includegraphics[width=30mm]{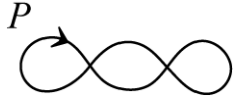}
\caption{Knot projection $P$.}
\label{twist}
\end{center}
\end{figure}

\begin{align*}
M(P)= \left(
\begin{array}{cccc}
0 & 1 & 2 & 1\\
1 & 2 & 1 & 2\\
1 & 0 & 1 & 0\\
2 & 1 & 0 & 1\\
\end{array}
\right) 
\end{align*}
and the ou matrix of $P$ is: 
\begin{align*}
U(P)= \left(
\begin{array}{cccc}
0 & 1 & 2 & 1\\
1 & 2 & 1 & 2\\
1 & 0 & 1 & 0\\
2 & 1 & 0 & 1\\
\end{array}
\right) 
\left(
\begin{array}{cccc}
-1 & 0 & 0 & 1  \\
1 & -1 & 0 & 0  \\
0 & 1 & -1 & 0 \\
0 & 0 & 1 & -1  \\
\end{array}
\right)
\end{align*}
\begin{align*}
 = \left(
\begin{array}{cccc}
1 & 1 & -1 & -1 \\
1 & -1 & 1 & -1 \\
-1 & 1 & -1 & 1 \\
-1 & -1 & 1 & 1 \\
\end{array}
\right) .
\end{align*}
\label{ou-matrix}
\end{example}
\phantom{x}

\noindent Each row of $U(P)$ corresponds to a knot diagram, each column corresponds to a crossing, and each element represents the over/under information where $1$ means over and $-1$ means under. 
Since we pass each crossing twice, there are just $c$ pairs of columns uniquely such that the sum of them is $\bm{0}$. 
For example, $U(P)$ of Example \ref{ou-matrix} has the pairs 1st and 4th, and 2nd and 3rd. 
From the pairing, we can recover the Gauss diagram of $P$. \\

We define the warping matrix without signs for knot diagrams. 
Let $D$ be an oriented knot diagram on $S^2$, and $P$ the knot projection obtained from $D$ by ignoring the over/under information. 
We define the warping matrix $\overline{M}(D)$ to be the matrix acquired from $M(P)$ by deleting the row of $D$. 
We can also obtain $\overline{M}(D)$ from the ordinary warping matrix $M(D)$ just by removing the signs of elements. 
For example, we have 

\newsavebox{\boxa}
\sbox{\boxa}{\includegraphics[width=11mm]{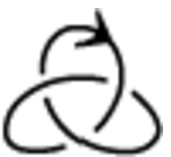}}
\begin{equation*}
\overline{M} ( \parbox[c]{11mm}{\usebox{\boxa}})=
\begin{pmatrix}
0 & 1 & 2 & 3 & 2 & 1 \\
1 & 0 & 1 & 2 & 3 & 2 \\
1 & 2 & 3 & 2 & 1 & 0 \\
2 & 1 & 0 & 1 & 2 & 3 \\
2 & 1 & 2 & 1 & 2 & 1 \\
2 & 3 & 2 & 1 & 0 & 1 \\
3 & 2 & 1 & 0 & 1 & 2 \\
\end{pmatrix}
\end{equation*}

\noindent (cf. Fig. \ref{image}).

\section{Proof of Theorem \ref{mainthm} and \ref{mainthm2}}

In this section we prove Theorem \ref{mainthm} and \ref{mainthm2}. 
We show the following lemma:  

\phantom{x}
\begin{lemma}
Let $c$ be an integer which is greater than 1. 
Let $a_1, a_2, \dots ,a_c, a_{c+1}$ be integers. 
We have the following equation. 
\begin{align}
\left|
\begin{array}{cccccc}
a_1 & -1 & 1 & \ldots & 1 & 1 \\
a_2 & 1 & -1 & \ldots & 1 & 1 \\
\vdots & \vdots & \vdots & \ddots & \vdots \\
a_{c-1} & 1 & 1 & \ldots & -1 & 1 \\
a_c & 1 & 1 & \ldots & 1 & -1 \\
a_{c+1} & -1 & -1 & \ldots & -1 & -1 
\end{array}
\right|
= -2^{c-1} \left( \sum _{i=1}^c a_i +(c-2) a_{c+1} \right)
\label{formula1}
\end{align}

\begin{proof}
By adding the $(c+1)$th row to the other rows, the equation (\ref{formula1}) is equivalent to: 

\begin{align}
\left|
\begin{array}{cccccc}
a_1 +a_{c+1} & -2 & 0 & \ldots & 0 & 0 \\
a_2 +a_{c+1} & 0 & -2 & \ldots & 0 & 0 \\
\vdots & \vdots & \vdots & \ddots & \vdots \\
a_{c-1} +a_{c+1} & 0 & 0 & \ldots & -2 & 0 \\
a_c +a_{c+1} & 0 & 0 & \ldots & 0 & -2 \\
a_{c+1} & -1 & -1 & \ldots & -1 & -1 
\end{array}
\right|
= -2^{c-1} \left( \sum _{i=1}^c a_i +(c-2) a_{c+1} \right)
\label{formula2}
\end{align}
We will show the equation (\ref{formula2}) by an induction on $c$. \\

\noindent $\bullet$ For $c=2$, we have 
\begin{align*}
\left|
\begin{array}{ccc}
a_1 +a_3 & -2 & 0 \\
a_2 +a_3 & 0 & -2 \\
a_3 & -1 & -1 
\end{array}
\right|
= -2(a_1 + a_2)
\end{align*}
Hence (\ref{formula2}) holds for $c=2$. \\

\noindent $\bullet$ Assume (\ref{formula2}) holds for $c=k-1$ for an integer $k$ greater than two. 
Now we consider the case $c=k$. 

\begin{align*}
\left|
\begin{array}{cccccc}
a_1 +a_{k+1} & -2 & 0 & \ldots & 0 & 0 \\
a_2 +a_{k+1} & 0 & -2 & \ldots & 0 & 0 \\
\vdots & \vdots & \vdots & \ddots & \vdots \\
a_{k-1} +a_{k+1} & 0 & 0 & \ldots & -2 & 0 \\
a_k +a_{k+1} & 0 & 0 & \ldots & 0 & -2 \\
a_{k+1} & -1 & -1 & \ldots & -1 & -1 
\end{array}
\right|
\end{align*}
The Laplace expansion along the $(k+1)$th column yields: 
\begin{align*}
(-1)^{k+(k+1)}\times (-2)
\left|
\begin{array}{ccccc}
a_1 +a_{k+1} & -2 & 0 & \ldots & 0 \\
a_2 +a_{k+1} & 0 & -2 & \ldots & 0 \\
\vdots & \vdots & \vdots & \ddots  \\
a_{k-1} +a_{k+1} & 0 & 0 & \ldots & -2 \\
a_{k+1} & -1 & -1 & \ldots & -1 
\end{array}
\right|
\end{align*}
\begin{align*}
+(-1)^{(k+1)+(k+1)}\times (-1)
\left|
\begin{array}{ccccc}
a_1 +a_{k+1} & -2 & 0 & \ldots & 0 \\
a_2 +a_{k+1} & 0 & -2 & \ldots & 0 \\
\vdots & \vdots & \vdots & \ddots \\
a_{k-1} +a_{k+1} & 0 & 0 & \ldots & -2 \\
a_k +a_{k+1} & 0 & 0 & \ldots & 0 
\end{array}
\right|
\end{align*}
Laplace expansion again along the $k$th row at the second term yields: 
\begin{align}
2
\left|
\begin{array}{ccccc}
a_1 +a_{k+1} & -2 & 0 & \ldots & 0 \\
a_2 +a_{k+1} & 0 & -2 & \ldots & 0 \\
\vdots & \vdots & \vdots & \ddots  \\
a_{k-1} +a_{k+1} & 0 & 0 & \ldots & -2 \\
a_{k+1} & -1 & -1 & \ldots & -1 
\end{array}
\right|
-(-1)^{k+1}(a_k+a_{k+1})
\left|
\begin{array}{cccc}
-2 & 0 & \ldots & 0 \\
0 & -2 & \ldots & 0 \\
\vdots & \vdots & \ddots & \vdots \\
0 & 0 & \ldots & -2
\end{array}
\right|
\label{formula3}
\end{align}
Remark the $(k, 1)$ element of the first matrix in (\ref{formula3}) is $a_{k+1}$. 
By assumption, (\ref{formula3}) is equal to
\begin{align*}
& 2 \left\{ -2^{(k-1)-1} \left( \sum _{i=1}^{k-1} a_i + (k-1-2) a_{k+1} \right) \right\} - (-1)^{k+1}(a_k+a_{k+1})(-2)^{k-1}\\
& =-2^{k-1} \left( \sum _{i=1}^{k-1} a_i + (k-3) a_{k+1} \right) -2^{k-1}(a_k+a_{k+1})\\
& =-2^{k-1} \left( \sum _{i=1}^{k-1} a_i + (k-3) a_{k+1} +a_k +a_{k+1} \right) \\
& =-2^{k-1} \left( \sum _{i=1}^k a_i +(k-2)a_{k+1} \right) .
\end{align*}
Hence (\ref{formula2}) holds. 
\end{proof}
\label{ranklem}
\end{lemma}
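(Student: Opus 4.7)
The plan is to simplify the determinant via row operations and then evaluate it by cofactor expansion, probably using induction on $c$.

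First I would notice the symmetric structure of the right $c \times c$ block: rows $1, \ldots, c$ each have a single $-1$ on the ``diagonal'' (in column $i+1$ for row $i$) and $+1$ elsewhere, while row $c+1$ is uniformly $-1$. This suggests adding row $c+1$ to each of the first $c$ rows: every $+1$ in the right block becomes $0$ and the $-1$ becomes $-2$, while the first column becomes $(a_1+a_{c+1}, \ldots, a_c+a_{c+1}, a_{c+1})^T$. The determinant is unchanged, so the problem reduces to evaluating the sparse matrix appearing in (\ref{formula2}).

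Next I would induct on $c$. The base case $c = 2$ is a direct computation of a $3 \times 3$ determinant. For the inductive step, I would expand along the last column: only two entries are nonzero, the $-2$ at row $c$ and the $-1$ at row $c+1$. The cofactor for the $(c, c+1)$ entry yields a $c \times c$ minor whose shape is exactly the left-hand side of (\ref{formula2}) for $c-1$, with the role of the last parameter still played by $a_{c+1}$ and the other parameters being $a_1, \ldots, a_{c-1}$, so the induction hypothesis applies directly. The cofactor for the $(c+1, c+1)$ entry yields a $c \times c$ minor whose last row is $(a_c + a_{c+1}, 0, \ldots, 0)$; expanding once more along that row reduces to a diagonal matrix $(-2)I_{c-1}$ with determinant $(-2)^{c-1}$.

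The main obstacle will be the sign bookkeeping across the two levels of cofactor expansion and the final repackaging of $(k-3)a_{k+1} + a_k + a_{k+1}$ as $(k-2)a_{k+1} + a_k$. An alternative route that sidesteps induction is to expand the row-reduced matrix along its last row in one shot: the minor at column $1$ is already $-2I_c$, and the minor at any column $j \ge 2$ has a sparse row $j-1$ that reduces it by a single expansion to $-2I_{c-1}$. I would keep this direct expansion as a fallback if the recursive bookkeeping becomes unwieldy.
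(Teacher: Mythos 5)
Your proposal is correct and follows essentially the same route as the paper: add the last row to the first $c$ rows to reach the sparse form (\ref{formula2}), then induct on $c$ with a Laplace expansion along the last column, where the $(c,c+1)$ cofactor reproduces the $c-1$ case (with $a_{c+1}$ still in the role of the last parameter) and the $(c+1,c+1)$ cofactor collapses via its sparse last row to $(-2)^{c-1}$ times $a_c+a_{c+1}$. The only difference is cosmetic: the paper carries the sign bookkeeping explicitly through the two expansions, which is exactly the step you flagged as the main remaining work.
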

\phantom{x}

\noindent By multiplying by $-1$ the $(c+1)$th column, we obtain the following corollary from Lemma \ref{ranklem}:

\phantom{x}
\begin{corollary}
We have 
\begin{align}
\left|
\begin{array}{cccccc}
a_1 & -1 & 1 & \ldots & 1 & -1 \\
a_2 & 1 & -1 & \ldots & 1 & -1 \\
\vdots & \vdots & \vdots & \ddots & \vdots \\
a_{c-1} & 1 & 1 & \ldots & -1 & -1 \\
a_c & 1 & 1 & \ldots & 1 & 1 \\
a_{c+1} & -1 & -1 & \ldots & -1 & 1 
\end{array}
\right|
= 2^{c-1} \left( \sum _{i=1}^c a_i +(c-2) a_{c+1} \right) .
\label{formula4}
\end{align}
\label{lankcor}
\end{corollary}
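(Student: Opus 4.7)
The plan is to derive the corollary directly from Lemma \ref{ranklem} via a single elementary column operation, as the sentence preceding the statement already hints. Specifically, I would compare the matrix whose determinant appears in (\ref{formula4}) with the one in (\ref{formula1}) column by column, identify the unique difference, and then invoke the effect of that column operation on the determinant.

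First I would verify that the first $c$ columns of the two matrices coincide. Column $1$ is $(a_1,\dots,a_{c+1})^T$ in both. For each $j=2,\dots,c$, both matrices carry a $-1$ in position $(j-1,j)$, $1$'s in the remaining entries of rows $1$ through $c$, and $-1$ in row $c+1$; this is a routine entry-by-entry check. Then I would look at the $(c+1)$th column: in Lemma \ref{ranklem} it reads $(1,\dots,1,-1,-1)^T$ (with $c-1$ ones followed by two $-1$'s), while in Corollary \ref{lankcor} it reads $(-1,\dots,-1,1,1)^T$. These two column vectors are negatives of one another, so the matrix in (\ref{formula4}) is obtained from the one in (\ref{formula1}) by multiplying only the final column by $-1$.

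Since scaling a single column by $\lambda$ multiplies the determinant by $\lambda$, this operation flips the sign of the determinant. Applying Lemma \ref{ranklem}, the left-hand side of (\ref{formula4}) equals
\[
-\Bigl(-2^{c-1}\bigl(\textstyle\sum_{i=1}^{c} a_i+(c-2)a_{c+1}\bigr)\Bigr)=2^{c-1}\bigl(\textstyle\sum_{i=1}^{c} a_i+(c-2)a_{c+1}\bigr),
\]
which is exactly the claimed identity.

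The only potential obstacle is the clerical one of making sure that no additional change (such as a hidden row permutation or a second altered column) is concealed in the reshuffled sign pattern, which could contribute a spurious sign. Because that verification is a direct inspection of the two displays, I expect the argument to be immediate, and I would present it as essentially a one-line deduction from Lemma \ref{ranklem}.
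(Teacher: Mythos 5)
Your argument is correct and is exactly the paper's own derivation: the paper also obtains the corollary by multiplying the $(c+1)$th column by $-1$ and applying Lemma \ref{ranklem}, so the sign of the determinant flips from $-2^{c-1}\left(\sum_{i=1}^{c} a_i+(c-2)a_{c+1}\right)$ to the stated value. Your extra entry-by-entry check that the remaining columns coincide is the only verification needed, and it holds.
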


Now we prove Theorem \ref{mainthm}. 

\phantom{x}
\noindent \textit{Proof of Theorem \ref{mainthm}.} \ 
For a $2^c \times 2c$ matrix $M(P)=( \bm{a}_1, \bm{a}_2, \dots ,\bm{a}_{2c})$, subtract $\bm{a}_{2c-1}$ from $\bm{a}_{2c}$, $\bm{a}_{2c-2}$ from $\bm{a}_{2c-1}$, $\bm{a}_{2c-3}$ from $\bm{a}_{2c-2}$, $\dots$, and $\bm{a}_1$ from $\bm{a}_2$. 
Let $( \bm{a}, \bm{v_1}, \bm{v_2}, \dots ,\bm{v_{2c-1}})$ be the matrix we obtain by the procedure. 
Note that $( \bm{v_1}, \bm{v_2}, \dots ,\bm{v_{2c-1}})$ is a submatrix of $U(P)$. 
By the property of ou matrix, there are just $(c-1)$ pairs of columns $\bm{v}_i$ and $\bm{v}_j$ uniquely such that $\bm{v}_i + \bm{v}_j = \bm{0}$. 
For each pair, add one to the other. 
By reordering some columns $\bm{v}_i$, we have 
\begin{align*}
(\bm{a}, \bm{v_{\varphi _1}}, \bm{v_{\varphi _2}}, \dots ,\bm{v_{\varphi _c}}, \bm{0}, \bm{0}, \dots ,\bm{0} ).
\end{align*}
Hence
\begin{align*}
\mathrm{rank} M(P) \le c(P)+1. 
\end{align*}
Next we show that $\bm{a}, \bm{v_{\varphi _1}}, \bm{v_{\varphi _2}}, \dots ,\bm{v_{\varphi _c}}$ are linearly independent. 
Since the columns $\bm{v_{\varphi _1}}, \bm{v_{\varphi _2}}, \dots ,\bm{v_{\varphi _c}}$ correspond to all the $c$ crossings, and the rows correspond to all the over/under information, we can obtain the following $(c+1) \times (c+1)$ submatrix of $( \bm{a}, \bm{v_{\varphi _1}}, \bm{v_{\varphi _2}}, \dots ,\bm{v_{\varphi _c}} )$ by reordering some rows
\begin{align*}
\left(
\begin{array}{cccccc}
a_1 & -1 & 1 & \ldots & 1 & 1 \\
a_2 & 1 & -1 & \ldots & 1 & 1 \\
\vdots & \vdots & \vdots & \ddots & \vdots \\
a_{c-1} & 1 & 1 & \ldots & -1 & 1 \\
a_c & 1 & 1 & \ldots & 1 & -1 \\
a_{c+1} & -1 & -1 & \ldots & -1 & -1 
\end{array}
\right)
\end{align*}
where $a_i$ is an element of $\bm{a}$ $(i=1,2,\dots ,c+1)$. 
By Lemma \ref{ranklem}, the determinant of the submatrix is 
\begin{align*}
-2^{c-1} \left( \sum _{i=1}^c a_i +(c-2) a_{c+1} \right) . 
\end{align*}
Remark that elements of $\bm{a}$ are all non-negative and there are at most one $0$. 
Hence the determinant is non-zero. 
Therefore $\bm{a}, \bm{v_{\varphi _1}}, \bm{v_{\varphi _2}}, \dots ,\bm{v_{\varphi _c}}$ are linearly independent. 
Hence 
\begin{align*}
\mathrm{rank} M(P) = c(P)+1. 
\end{align*}
\hfill$\square$
\phantom{x}

\noindent We prove Theorem \ref{mainthm2}. 

\phantom{x}
\noindent \textit{Proof of Theorem \ref{mainthm2}.} \ 
Similar to Theorem \ref{mainthm} except that $\overline{M}(D)$ has one row fewer than $M(P)$. 
Remark that we can obtain at least one of the following two submatrices 
\begin{align*}
\left(
\begin{array}{cccccc}
a_1 & -1 & 1 & \ldots & 1 & 1 \\
a_2 & 1 & -1 & \ldots & 1 & 1 \\
\vdots & \vdots & \vdots & \ddots & \vdots \\
a_{c-1} & 1 & 1 & \ldots & -1 & 1 \\
a_c & 1 & 1 & \ldots & 1 & -1 \\
a_{c+1} & -1 & -1 & \ldots & -1 & -1 
\end{array}
\right)
\ \text{and} \ 
\left(
\begin{array}{cccccc}
a_1 & -1 & 1 & \ldots & 1 & -1 \\
a_2 & 1 & -1 & \ldots & 1 & -1 \\
\vdots & \vdots & \vdots & \ddots & \vdots \\
a_{c-1} & 1 & 1 & \ldots & -1 & -1 \\
a_c & 1 & 1 & \ldots & 1 & 1 \\
a_{c+1} & -1 & -1 & \ldots & -1 & 1 
\end{array}
\right) , 
\end{align*} 
which are the matrices of Lemma \ref{ranklem} and Corollary \ref{lankcor}. 
\hfill$\square$
\phantom{x}

\section{Warping incidence matrix}

In this section we define the warping incidence matrix for oriented knot diagrams. 
Let $D$ be an oriented knot diagram on $S^2$ with $c$ crossings. 
Label the edges $e_1, e_2, \dots , e_{2c}$ in order of traversal from an edge. 
Label the crossings $v_1, v_2, \dots ,v_c$. 
The \textit{warping incidence matrix} $m(D)$ of $D$ is defined as follows: 

\begin{eqnarray*}
m(D)=( a_{i j}), 
\end{eqnarray*}

\begin{eqnarray*}
\text{where }   a_{i j}=
\left \{ \begin{array}{ll}
1 & \text{(} v_i \text{ is a warping crossing point of } D_{b_j} \text{)}\\
0 & \text{(} v_i \text{ is not a warping crossing point of }D_{b_j} \text{)}, 
\end{array} \right.
\end{eqnarray*}

\noindent where $b_j$ is a base point on $e_j$. 
An example is shown in Fig. \ref{image2}. 
We consider warping incidence matrices up to permutations on rows and cyclic permutations on columns. 
\begin{figure}[h]
\begin{center}
\includegraphics[width=130mm]{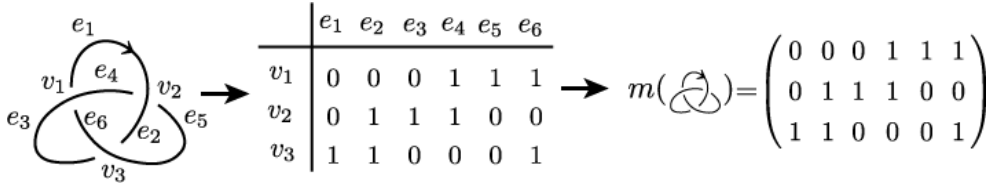}
\caption{Warping incidence matrix.}
\label{image2}
\end{center}
\end{figure}

\noindent By definition, we have the following: 

\phantom{x}
\begin{proposition}
The sum of all the rows of the warping incidence matrix of a knot diagram $D$ is the warping degree sequence of $D$. 
\end{proposition}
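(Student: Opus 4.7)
The plan is to verify the statement directly from the definitions, with no real machinery required beyond unfolding what each object means.

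First I would write down what the sum of all rows of $m(D)$ is in coordinates. If $m(D) = (a_{ij})$ is the $c \times 2c$ warping incidence matrix, then the sum of all rows is the row vector whose $j$-th entry is $\sum_{i=1}^{c} a_{ij}$. By the definition of $a_{ij}$, this entry $a_{ij}$ is an indicator: it equals $1$ exactly when the crossing $v_i$ is a warping crossing point of $D_{b_j}$, and $0$ otherwise. Hence $\sum_{i=1}^{c} a_{ij}$ is simply the total number of crossings of $D$ that are warping crossing points when the base point is $b_j$.

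Next I would identify this count with the warping degree. By Kawauchi's definition recalled in Section 2, the warping degree $d(D_{b_j})$ is precisely the number of warping crossing points of $D_{b_j}$. So $\sum_{i=1}^{c} a_{ij} = d(D_{b_j})$ for every $j = 1, 2, \ldots, 2c$.

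Finally, I would observe that the row vector $\bigl(d(D_{b_1}), d(D_{b_2}), \ldots, d(D_{b_{2c}})\bigr)$ is, by definition, the warping degree sequence of $D$ (it is the row of the warping matrix $M(P)$ corresponding to $D$, as described in Section 2). Combining the two observations yields the claim. There is no real obstacle: the proposition is essentially a bookkeeping statement, and the only thing to be careful about is matching the indexing conventions (rows indexed by crossings $v_i$, columns indexed by edge base points $b_j$) so that the row sum of indicators lines up entry-by-entry with the warping degree sequence.
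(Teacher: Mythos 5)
Your proof is correct and is exactly the argument the paper has in mind: the paper states this proposition with the remark ``By definition'' and no further proof, and your unfolding of the definitions (row sums count warping crossing points of $D_{b_j}$, which is $d(D_{b_j})$, entrywise giving the warping degree sequence) is precisely that intended bookkeeping.
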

\phantom{x}

\noindent We show the following proposition. 

\phantom{x}
\begin{proposition}
Let $m(D)=(a_{i  j})$ be the warping incidence matrix of a knot diagram $D$ with $c(D)=c$. 
For any integer $k \in \{ 1,2, \dots ,c \}$, there exists an integer $l \in \{ 1,2, \dots , 2c \}$ uniquely such that 
\begin{eqnarray}
\left \{ \begin{array}{l}
a_{k \ l-1}=0, \\
a_{k \ l}=1 \text{ and} \\
a_{i \ l-1}=a_{i \ l} \ (i \neq k ).
\end{array} \right.
\label{kl-formula}
\end{eqnarray}
\label{kl}
\end{proposition}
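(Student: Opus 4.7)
The plan is to compare adjacent columns $l-1$ and $l$ of $m(D)$ by analyzing what happens to each crossing's warping status as the base point slides from $b_{l-1}$ on $e_{l-1}$ to $b_l$ on $e_l$, passing through exactly one vertex, namely the unique crossing $w_l$ lying between these two consecutive edges. All edge indices are taken cyclically in $\{1, \ldots, 2c\}$, so $e_0$ means $e_{2c}$.

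For every crossing $p \neq w_l$, I would show $a_{p,\, l-1} = a_{p,\, l}$. The short arc of $D$ that the base point sweeps across (from $b_{l-1}$ through $w_l$ to $b_l$) contains no encounter with $p$, since both encounters with $p$ happen at the vertex $p$ itself, which is distinct from $w_l$. In the cyclic order on the traversal, $b_{l-1}$ and $b_l$ therefore sit on the same side of the pair of encounters with $p$, so the same one of these encounters comes first from either base point; its over/under type, and hence the warping status of $p$, is unchanged. At $w_l$ itself, by contrast, the sliding base point crosses exactly one of its two encounters, so the first encounter switches from the $e_{l-1}\to e_l$ strand (the first encounter seen from $b_{l-1}$) to the other strand through $w_l$ (the first encounter seen from $b_l$); since one of these two strands is the over-strand and the other the under-strand, the warping status of $w_l$ flips between columns $l-1$ and $l$.

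To finish, I would exploit the over/under structure at $v_k$. Traversing $D$ once, $v_k$ is crossed exactly once on its over-strand, say during the step $e_{l-1}\to e_l$, and exactly once on its under-strand, during some step $e_{m-1}\to e_m$. By the flip analysis, from $b_{l-1}$ the first encounter at $v_k$ is over (so $v_k$ is not warping, $a_{k,\,l-1}=0$), while from $b_l$ the first encounter has switched to under (so $v_k$ is warping, $a_{k,\,l}=1$); together with the row-stability of Step~1, this $l$ satisfies all three conditions in (\ref{kl-formula}). The other step $e_{m-1}\to e_m$ produces the reverse flip $1\to 0$, so it fails the required sign pattern, and at every index outside $\{l,m\}$ the value in row $k$ does not change at all. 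This gives both existence and uniqueness of $l$. The main subtlety to nail down precisely is the cyclic-order argument in the second paragraph, asserting that the relative order of the two encounters at any $p\neq w_l$ is genuinely unaffected by the small shift $b_{l-1}\to b_l$; once this is justified, everything else follows quickly.
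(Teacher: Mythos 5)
Your proposal is correct and follows essentially the same route as the paper: the paper's proof also observes that the $1$'s in row $k$ form a single consecutive (cyclic) block starting right after the over-passage of $v_k$ and ending at its under-passage, and that consecutive edges $e_{l-1}$, $e_l$ have the same warping crossing points except at the crossing between them. Your base-point-sliding argument just spells out in words what the paper reads off from Fig.~\ref{crossings}, including the uniqueness via the single $0\to 1$ transition in row $k$.
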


\begin{proof}
See Fig. \ref{crossings}. 
At the $k$th row in $m(D)$, $1$ appears from $l$th column through $m$th column because the corresponding edges have the crossing $v_k$ as a warping crossing point, 
whereas $0$ appears from $(m+1)$th column through $(l-1)$th column. 
Hence we have $a_{k \ l-1}=0$ and $a_{k \ l}=1$. 
Since the two edges $e_{l-1}$ and $e_l$ have the same warping crossing points except at $v_k$, we have $a_{i \ l-1}=a_{i \ l}$ for $i \ne k$. 
\begin{figure}[h]
\begin{center}
\includegraphics[width=60mm]{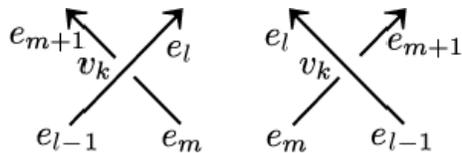}
\caption{A crossing and four edges.}
\label{crossings}
\end{center}
\end{figure}
\end{proof}
\phantom{x}

\noindent We have the following corollaries: 

\phantom{x}
\begin{corollary}
Let $D$ be a knot diagram, and let $D^i$ be the diagram obtained from $D$ by a crossing change at the crossing $v_i$. 
Then, $m(D^i)$ is obtained from $m(D)$ by switching 0 and 1 at the $i$th row. 
\label{cc-cor}
\end{corollary}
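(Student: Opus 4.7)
The plan is to unpack the definition of warping crossing point and check, base point by base point and crossing by crossing, how the status changes when we flip $v_i$. Fix a base point $b_j$ on the edge $e_j$, and traverse $D$ (respectively $D^i$) starting from $b_j$. The underlying knot projection of $D$ and $D^i$ coincides, and so the cyclic order in which the crossings are first encountered along the traversal is identical for $D_{b_j}$ and $D^i_{b_j}$; only the over/under labels at $v_i$ differ between the two diagrams.

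First I would handle the rows $h \neq i$. For such a crossing $v_h$, whether $v_h$ is a warping crossing point of $D_{b_j}$ depends only on the over/under information at $v_h$ (namely, whether the first strand through $v_h$ encountered from $b_j$ is the under-strand). Since the crossing change at $v_i$ does not alter the over/under data at $v_h$, the warping status of $v_h$ is the same in $D_{b_j}$ and $D^i_{b_j}$ for every $j$. Hence the $h$th row of $m(D^i)$ equals the $h$th row of $m(D)$.

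Next I would treat the row $i$. For the crossing $v_i$, the first of the two strands traversed from $b_j$ is the same strand in $D$ and $D^i$, but the over/under assignment at $v_i$ is reversed. Therefore $v_i$ is a warping crossing point of $D_{b_j}$ if and only if it is \emph{not} a warping crossing point of $D^i_{b_j}$. This means $a_{ij}(D^i) = 1 - a_{ij}(D)$ for every $j$, i.e.\ the $i$th row of $m(D)$ has its $0$'s and $1$'s interchanged to produce the $i$th row of $m(D^i)$.

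Combining the two observations immediately gives the statement of the corollary. No estimates or case analysis beyond the definition are required, so there is no real obstacle; the only point that needs care is verifying that the cyclic order of first visits to the crossings along the traversal is genuinely unchanged by a crossing change, which is clear because both diagrams share the same underlying projection and the same base point.
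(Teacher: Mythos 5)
Your argument is correct and is exactly the definitional check the paper has in mind: the paper states this corollary without proof, treating it as immediate from the definition of warping crossing point, and your verification (rows $h\neq i$ unchanged since the over/under data at $v_h$ and the traversal order from $b_j$ are unaffected; row $i$ flipped since the first passage at $v_i$ switches between over and under) is precisely that. Nothing further is needed.
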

\phantom{x}

\begin{corollary}
We can obtain the Gauss diagram of $D$ without signs from $m(D)$. 
\end{corollary}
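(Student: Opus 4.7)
The plan is to read off the chord structure of the unsigned Gauss diagram directly from $m(D)$. Recall that an unsigned Gauss diagram consists of an oriented circle carrying $2c$ marked points (one for each of the two occurrences of each of the $c$ crossings along the traversal) together with a chord pairing of these $2c$ points into $c$ pairs, one per crossing. Thus it suffices, given $m(D)$, to recover for each crossing $v_k$ the two cyclic positions along the knot at which we pass through $v_k$.

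First I would use the structural description established in the proof of Proposition \ref{kl}: in every row $k$ the entries equal to $1$ occupy a single contiguous cyclic block, say columns $l_k$ through $m_k$. Consequently row $k$ has exactly two cyclic transitions, a $0 \to 1$ transition between columns $l_k-1$ and $l_k$, and a $1 \to 0$ transition between columns $m_k$ and $m_k+1$. Then I would give these transitions their geometric meaning: moving the base point from $b_{j-1}$ on $e_{j-1}$ to $b_{j}$ on $e_{j}$ passes through exactly one crossing of $D$, so at most one row can change between columns $j-1$ and $j$, namely the row of that crossing. Hence each column hosts a transition in exactly one row, and the two transition columns of row $k$ pinpoint the two edges on which the knot arrives at $v_k$, i.e., the two passes through $v_k$.

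Finally I would assemble the unsigned Gauss diagram: place $2c$ marked points on an oriented circle, one between each consecutive pair of edges $e_{j-1}$, $e_{j}$, and for each $k\in\{1,\ldots,c\}$ draw the chord joining the marks at columns $l_k$ and $m_k+1$, labelled $v_k$. Because every column contributes a transition to precisely one row, each marked point is the endpoint of exactly one chord, so this produces a well-defined chord diagram which, by construction, coincides with the unsigned Gauss diagram of $D$. I do not expect a real obstacle here; the argument is essentially a direct unpacking of Proposition \ref{kl} combined with the observation that a single cross-over of the base point flips the warping status of exactly one crossing.
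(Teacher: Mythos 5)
Your proof is correct and is essentially the argument the paper intends: the corollary is stated as a direct consequence of Proposition \ref{kl}, whose proof already records that each row of $m(D)$ is a cyclically contiguous block of $1$'s whose two transitions occur exactly at the two passages of the corresponding crossing along the traversal, which is precisely the chord data you extract. (If one reads ``Gauss diagram without signs'' as still carrying the over/under arrows, these are also recoverable from your data, since the $0\to 1$ transition of row $k$ occurs at the over passage of $v_k$ and the $1\to 0$ transition at the under passage.)
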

\phantom{x}

\noindent We show the following lemma: 

\phantom{x}
\begin{lemma}
Let $D$ be a knot diagram with $c(D)=c$. 
Let $m(D)$ be the warping incidence matrix of $D$, and $\bm{v_i}$ the $i$th row of $m(D)$ $(i=1,2, \dots ,c)$. 
Then $\bm{v_1}, \bm{v_2}, \dots ,\bm{v_c}$ and $\bm{1}$ are linearly independent, where $\bm{1}$ is the row vector $(1 1 \dots 1)$ with length $2c$. 
\label{keylem}
\end{lemma}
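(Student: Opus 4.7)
The plan is to exploit Proposition \ref{kl} directly: it tells us that each row $\bm{v_k}$ can be singled out by looking at a specific pair of adjacent column positions. This immediately gives a clean way to extract each coefficient in a linear dependence.

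First, I would suppose a linear combination
\begin{align*}
\alpha_1 \bm{v_1} + \alpha_2 \bm{v_2} + \cdots + \alpha_c \bm{v_c} + \beta \bm{1} = \bm{0}
\end{align*}
and aim to show $\alpha_1 = \cdots = \alpha_c = \beta = 0$. Fix any $k \in \{1, 2, \dots, c\}$. By Proposition \ref{kl}, there exists a unique $l = l_k \in \{1, 2, \dots, 2c\}$ (indices modulo $2c$) such that $a_{k, l-1} = 0$, $a_{k, l} = 1$, and $a_{i, l-1} = a_{i, l}$ for every $i \neq k$.

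Next, reading the assumed equation at the $(l-1)$th and $l$th coordinates gives two scalar equations
\begin{align*}
\sum_{i=1}^{c} \alpha_i a_{i, l-1} + \beta = 0, \qquad \sum_{i=1}^{c} \alpha_i a_{i, l} + \beta = 0.
\end{align*}
Subtracting the first from the second and using the equalities $a_{i, l-1} = a_{i, l}$ for $i \neq k$ together with $a_{k, l} - a_{k, l-1} = 1$, I obtain $\alpha_k = 0$. Since $k$ was arbitrary, every $\alpha_k$ vanishes, and then the original combination collapses to $\beta \bm{1} = \bm{0}$, forcing $\beta = 0$.

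Essentially there is no serious obstacle once Proposition \ref{kl} is in hand; the entire argument is a two-line column comparison. The only thing to be a bit careful about is the cyclic indexing when $l_k = 1$ (so that ``$l-1$'' wraps around to $2c$), but this is harmless because the two scalar equations above are column equations and do not depend on adjacency within the vector, only on the two fixed column indices. Thus the lemma follows directly.
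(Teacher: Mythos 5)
Your proposal is correct and coincides with the paper's own argument: both apply Proposition \ref{kl} to compare the $(l-1)$th and $l$th coordinates of the assumed linear relation, extract each coefficient $\alpha_k$ as the difference, and then conclude $\beta=0$. Your remark on cyclic indexing is a harmless clarification not present in the paper.
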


\begin{proof}
Let 
\begin{align}
\lambda _1 \bm{v_1} + \lambda _2 \bm{v_2} + \dots +\lambda _c \bm{v_c} + \lambda _{c+1} \bm{1}  = \bm{0}. 
\label{l-combination}
\end{align}
For each $k \in \{1,2, \dots ,c \}$, there exists an integer $l$ satisfying (\ref{kl-formula}) in Proposition \ref{kl}. 
Since the $l$th and $(l-1)$th elements of (\ref{l-combination}) are zero, the difference of them, which is $\lambda _k$, is also zero. 
Hence we have $\lambda _1 = \lambda _2 = \dots = \lambda _c =0$ and therefore $\lambda _{c+1} =0$. 
\end{proof}
\phantom{x}

\noindent From Lemma \ref{keylem}, we have the following corollary: 

\phantom{x}
\begin{corollary}
We have 
\begin{align*}
\mathrm{rank} m(D)=c(D). 
\end{align*}
\end{corollary}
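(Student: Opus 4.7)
The plan is to sandwich the rank between $c$ above and $c$ below. The upper bound is immediate from the shape of the matrix: by definition $m(D)$ has $c$ rows (one for each crossing $v_1,\dots,v_c$) and $2c$ columns, so $\mathrm{rank}\, m(D) \le c$.

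For the lower bound, I would invoke Lemma \ref{keylem} directly. That lemma says the $c+1$ vectors $\bm{v_1}, \bm{v_2}, \dots, \bm{v_c}, \bm{1}$ are linearly independent in $\mathbb{R}^{2c}$. Linear independence is hereditary under taking subsets, so in particular the rows $\bm{v_1}, \dots, \bm{v_c}$ of $m(D)$ are themselves linearly independent. Hence $\mathrm{rank}\, m(D) \ge c$, and combined with the upper bound this gives $\mathrm{rank}\, m(D) = c(D)$.

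There is essentially no obstacle here: the corollary is a one-line consequence once Lemma \ref{keylem} is in hand. The only content worth spelling out is the two-sided comparison (row count gives the upper bound, restriction of the independent set gives the lower bound), so the proof can be written in two or three sentences.
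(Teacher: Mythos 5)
Your argument is correct and is exactly the route the paper intends: the corollary is stated as an immediate consequence of Lemma \ref{keylem}, with the lower bound coming from the linear independence of $\bm{v_1},\dots,\bm{v_c}$ (a subset of the independent set in the lemma) and the upper bound from $m(D)$ having only $c$ rows. Nothing is missing.
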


\section{Linearly independent diagrams}
 
By Theorem \ref{mainthm}, each warping matrix $M(P)$ of a knot projection $P$ has $(c(P)+1)$ linearly independent rows. 
We say that knot diagrams obtained from a same knot projection $P$ are \textit{linearly independent} if the corresponding rows in $M(P)$ are linearly independent. 
We have the following theorem. 

\phantom{x}
\begin{theorem}
Let $D$ be a knot diagram with $c$ crossings. 
Then $D$ and all the $c$ diagrams obtained from $D$ by a single crossing change are linearly independent. 
\label{prop-independent}
\end{theorem}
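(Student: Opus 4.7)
\noindent \textit{Proof proposal.} \
The plan is to reduce the claim to the linear independence result of Lemma \ref{keylem} by expressing each of the $c+1$ warping degree sequences we care about in the basis provided by the rows of the warping incidence matrix $m(D)$ together with the all-ones vector $\bm{1}$. Let $\bm{v_1}, \bm{v_2}, \dots, \bm{v_c}$ denote the rows of $m(D)$. The first proposition of Section 4 gives the warping degree sequence of $D$ as $\bm{w} := \bm{v_1} + \bm{v_2} + \cdots + \bm{v_c}$. Corollary \ref{cc-cor} then tells us that for the diagram $D^i$ obtained from $D$ by a crossing change at $v_i$, the incidence matrix $m(D^i)$ agrees with $m(D)$ except that its $i$th row is $\bm{1} - \bm{v_i}$; summing rows, the warping degree sequence of $D^i$ equals $\bm{w} - 2\bm{v_i} + \bm{1}$.

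Next I would test a putative linear relation
\begin{align*}
\mu_0 \bm{w} + \sum_{i=1}^c \mu_i \bigl( \bm{w} - 2\bm{v_i} + \bm{1} \bigr) = \bm{0}
\end{align*}
among the $c+1$ rows of $M(P)$ corresponding to $D, D^1, \dots, D^c$. Substituting $\bm{w} = \sum_{j=1}^c \bm{v_j}$ rewrites the left-hand side as a linear combination of $\bm{v_1}, \dots, \bm{v_c}, \bm{1}$ in which the coefficient of $\bm{v_i}$ is $\mu_0 + \sum_{j=1}^c \mu_j - 2\mu_i$ and the coefficient of $\bm{1}$ is $\sum_{j=1}^c \mu_j$. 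Lemma \ref{keylem} forces each of these to vanish; the coefficient of $\bm{1}$ yields $\sum_j \mu_j = 0$, after which each coefficient of $\bm{v_i}$ collapses to $\mu_0 = 2\mu_i$. Hence $\mu_1 = \cdots = \mu_c = \mu_0/2$, and summing and using $\sum_j \mu_j = 0$ gives $\mu_0 = 0$, so all $\mu_i = 0$.

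The one step requiring care is the bookkeeping in the second paragraph: one must correctly redistribute the unknowns $\mu_i$ after the substitution $\bm{w} = \sum_j \bm{v_j}$ so that Lemma \ref{keylem} can be invoked. The factor $-2\bm{v_i}$ produced by the crossing change is exactly what makes the resulting $(c+1) \times (c+1)$ scalar system nonsingular; without it, the relation $\bm{w} = \sum_j \bm{v_j}$ would already be a dependence. Beyond this, the argument is a direct concatenation of Corollary \ref{cc-cor}, the row-sum proposition at the start of Section 4, and Lemma \ref{keylem}.
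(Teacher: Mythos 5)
Your proposal is correct and takes essentially the same route as the paper: both express $s(D)$ and $s(D^i)$ in terms of the rows of $m(D)$ and $\bm{1}$ via the row-sum proposition and Corollary \ref{cc-cor}, and then invoke Lemma \ref{keylem} to force all coefficients to vanish. The only cosmetic difference is that you solve the full $(c+1)$-coefficient system directly, whereas the paper first subtracts $s(D)$ from the other sequences before applying the same lemma.
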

\phantom{x}

\begin{proof}

Let $m(D)$ be the warping incidence matrix of $D$, and $\bm{v_i}$ be the $i$th row of $m(D)$. 
As mentioned in Section 4, the warping degree sequence $s(D)$ of $D$ is obtained by $\bm{v_1}+\bm{v_2}+ \dots +\bm{v_c}$. 
Let $D^i$ be the knot diagram obtained from $D$ by a crossing change at $v_i$. 
By Corollary \ref{cc-cor}, we have $s(D^i)=\bm{v_1}+\bm{v_2}+ \dots + \bm{v_{i-1}}+( \bm{1} - \bm{v_i})+\bm{v_{i+1}}+ \dots +\bm{v_c}$. 
We will prove that $s(D), s(D^1), s(D^2), \dots $ and $s(D^c)$ are linear independent. 
By subtracting $s(D)$ from the others, it is sufficient to show that $(\bm{v_1}+\bm{v_2}+ \dots +\bm{v_c})$, $( \bm{1}-2 \bm{v_1})$, $( \bm{1}-2 \bm{v_2}), \dots $ and $( \bm{1}-2 \bm{v_c})$ are linearly independent. 
Let 
\begin{align*}
\lambda _o (\bm{v_1}+\bm{v_2}+ \dots +\bm{v_c}) +\lambda _1 ( \bm{1}-2 \bm{v_1}) + \dots + \lambda _c( \bm{1}-2 \bm{v_c})=\bm{0}. 
\end{align*}
Then we have 
\begin{align*}
( \lambda _0 -2\lambda _1)\bm{v_1} + ( \lambda _0 -2\lambda _2)\bm{v_2} + \dots +( \lambda _0 -2\lambda _c)\bm{v_c} + ( \lambda _1 +\lambda _2 + \dots +\lambda _c)\bm{1}=0.
\end{align*}
By Lemma \ref{keylem}, we have $\lambda _0 -2\lambda _1=\lambda _0 -2\lambda _2= \dots =\lambda _0 -2\lambda _c=\lambda _1 +\lambda _2 + \dots +\lambda _c=0$. 
Hence $\lambda _0=\lambda _1= \dots = \lambda _c=0$. 
\end{proof}

\section*{Acknowledgments}
\noindent A.~S.~expresses gratitude to Professors Kazuaki Kobayashi and Takako Kodate for valuable discussions and suggestions.
T.~A., A.~S.~and R.~W.~are grateful to Yukari Fuseda, Yuui Onoda and Professor Yoshiro Yaguchi for valuable discussions.

\end{document}